 \numberwithin{equation}{section}
 \newtheorem{theorem}{Theorem}[section]
 \newtheorem{proposition}[theorem]{Proposition}
 \newtheorem{lemma}[theorem]{Lemma}
 \newtheorem{remark}[theorem]{Remark}
 \def\slash#1{\setbox0=\hbox{$#1$}#1\hskip-\wd0\hbox to\wd0{\hss\sl/\/\hss}}
 \title[Timelike conformal fields on closed $3$-manifolds]{Timelike conformal fields on closed $3$-manifolds}
 \author[Emmanuel Gnandi, Fortun\'e Massamba ]{Emmanuel Gnandi*,  Fortun\'e Massamba**}
 \dedicatory{In memory of Brice Amour Bivou Kieyela}
 \newcommand{\acr}{\newline\indent}
 \address{\llap{*\,}INSA de Toulouse\acr
 	Département de G\'enie Mathématique\acr
 	Université de Toulouse\acr
 	135 avenue de Rangueil\acr
 	31077 Toulouse cedex 4\acr
 	 France} 
 \email{kpanteemmanuel@gmail.com, gnandi@insa-toulouse.fr}
 \thanks{}
 \address{\llap{**\,}Discipline of Mathematics\acr 
 	School of Agriculture and Science\acr
 	University of KwaZulu-Natal\acr
 	Private Bag X01, Scottsville 3209\acr 
 	South Africa}  
 \email{massfort@yahoo.fr, Massamba@ukzn.ac.za}  
 \thanks{}  
 \subjclass[2020]{Primary 53C25; Secondary 53C50}
 \keywords{Sasakian structure, cosymplectic structure, Co-K\"ahler structure.}  
\begin{document}
 
 \begin{abstract}   
  This paper investigates timelike conformal vector fields on closed Lorentzian $3$-manifolds and shows that, although these fields form a broader class than Killing fields, their behavior in dimension three is nonetheless remarkably rigid. After performing a conformal change of the metric so that the vector field becomes unit and Killing, we analyze the geometry of the flow it generates through the framework of stable Hamiltonian structures and basic cohomology. Our main result proves that any nowhere-vanishing timelike conformal vector field necessarily arises as the Reeb vector field of either a Sasakian structure or a co-K\"ahler structure. In other words, every such Lorentzian conformal flow is intrinsically "Reeb-like", which forces the underlying geometry to be either contact or cosymplectic. This establishes a striking connection between Lorentzian geometry, Sasakian and co-K\"ahler structures, and the topology of flows in dimension~$3$.	 
 	
 	\end{abstract} 

 	\maketitle

 \section{Introduction}
 	
 Timelike conformal vector fields on Lorentzian manifolds have been widely studied for their geometric significance and for their role in relativity theory, where they encode symmetries of the underlying spacetime \cite{Kamishima, romero1995, sanchez1997structure, sanchez2006}. The $3$-dimensional case, in particular, has attracted substantial attention in the literature (see for instance, \cite{aazami, aazami1}). In the Riemannian setting, it is well known that conformal vector fields of unit length that are nowhere vanishing must in fact be Killing \cite{obota}. Such vector fields often give rise to classical geometric structures, most notably stable Hamiltonian structures, which include contact and cosymplectic structures as special cases \cite{cieliebak, rechtman}. The study of these geometries has a long and rich tradition, originating in the foundational work on contact and Sasakian geometry \cite{ blair2010riemannian, Boyer2008, sasaki1962differentiable}, as well as on cosymplectic and co-K\"ahler geometry \cite{cappelletti2013survey, goldberg1969integrability, li2008topology, libermann1962quelques}.
 In Lorentzian geometry, the presence of timelike vector fields introduces features that sharply distinguish this setting from the Riemannian one. The interaction between Lorentzian metrics and almost contact structures has been investigated in a variety of contexts \cite{olea2014, romero1995, sanchez1997structure}. However, the specific problem of determining when a timelike conformal vector field arises as a Reeb vector field has remained open. 
 
 In this paper, we address this question in dimension $3$, where geometric structures are particularly rich and classification problems become more tractable (see \cite{scott1983geometries} for more details and references therein).
 
 $3$-dimensional contact and almost contact geometry has been extensively studied, see for instance \cite{blairgoldberg1967topology, li2008topology, olszak1986normal} and the references therein. In particular, the topology of closed $3$-manifolds admitting Killing vector fields is by now well understood \cite{carriere}.
 
 The main theorem of this paper reveals a close connection between Lorentzian geometry and both co-K\"ahler and Sasakian geometry. More precisely, we show that any timelike conformal vector field which forms a strictly larger class than Killing vector fields must nevertheless be the Reeb vector field of either a Sasakian or a co-K\"ahler structure in dimension $3$.
 
 Our main result can be stated as follows. 
 \begin{theorem}
 	Let $M$ be a closed, oriented, smooth three-manifold equipped with a Lorentzian metric $g$.
 	If $R$ is a nowhere-vanishing timelike vector field that is conformal with respect to $g$, then $R$ is the Reeb vector field of either
 	\begin{enumerate}
 		\item[(i)] a Sasakian structure, or
 		\item[(ii)] a co-K\"ahler structure
 	\end{enumerate}
 	on $M$.
 \end{theorem}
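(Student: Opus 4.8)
The plan is to proceed in four steps, reducing the Lorentzian problem to a statement about a unit Killing field, recognizing the resulting data as a stable Hamiltonian structure, and then splitting into the contact and the closed cases.

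\textbf{Step 1 (Reduction to a unit Killing field).} Since $R$ is timelike and nowhere vanishing, $g(R,R)<0$ everywhere, so $e^{2u}:=-1/g(R,R)$ is a smooth positive function. I would first replace $g$ by the conformal metric $\tilde g=e^{2u}g$, for which $\tilde g(R,R)\equiv-1$. Writing the conformal condition as $\mathcal L_R g=2\sigma g$, one computes $\mathcal L_R\tilde g=2(Ru+\sigma)\tilde g$; evaluating both sides on $(R,R)$ and using $\tilde g(R,R)\equiv-1$ forces $Ru+\sigma\equiv0$, so $R$ is \emph{Killing} for $\tilde g$ (the Lorentzian analogue of the Riemannian fact of \cite{obota}). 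Setting $\eta:=-\tilde g(R,\cdot)$ gives $\eta(R)=1$, and the associated metric $h:=\tilde g+2\,\eta\otimes\eta$ is Riemannian with $R$ unit and Killing for $h$; from $\mathcal L_R\tilde g=0$ and $\eta(R)=1$ I obtain $\mathcal L_R\eta=0$, whence $\iota_R\,d\eta=0$ by Cartan's formula.

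\textbf{Step 2 (Stable Hamiltonian structure).} Let $\mu$ be the Riemannian volume form of $h$ and put $\omega:=\iota_R\mu$. As $R$ is Killing it preserves $\mu$, so $d\omega=\mathcal L_R\mu=0$; moreover $\iota_R\omega=0$, $\eta\wedge\omega=\mu>0$, and $\ker\omega=\mathbb R R\subset\ker d\eta$. Hence $(\eta,\omega)$ is a stable Hamiltonian structure whose Reeb field is $R$, placing the problem squarely in the framework of \cite{cieliebak,rechtman}. On a $3$-manifold the $2$-forms annihilated by $R$ form a line bundle generated by the nowhere-zero $\omega$, so I may write $d\eta=f\,\omega$ for a smooth function $f$, and applying $\mathcal L_R$ together with $\mathcal L_R d\eta=\mathcal L_R\omega=0$ shows $Rf=0$, i.e. $f$ is \emph{basic}.

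\textbf{Step 3 (The dichotomy --- the main obstacle).} The crux is to prove that $f$ is constant, so that the only possibilities are $f\equiv0$ and $f$ nowhere zero. The flow of $R$ is a nonsingular isometric flow on a closed oriented $3$-manifold, hence a transversally oriented Riemannian foliation $\mathcal F$; since its orbits are geodesics ($\nabla_RR=0$) the foliation is taut, its basic cohomology is finite dimensional, and $H^2_B(\mathcal F)\cong\mathbb R$ is generated by the basic-harmonic transverse volume $[\omega]_B$. As $d\eta$ is closed and basic, $[d\eta]_B=c\,[\omega]_B$ for a constant $c$, which already yields $\int_M(f-c)\,\mu=0$. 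I expect the genuinely difficult point to be upgrading this cohomological identity to the pointwise statement $f\equiv c$: one must control the basic-exact remainder in $(f-c)\,\omega=d_B\beta$ and rule out the degenerate scenario where $d\eta$ vanishes on a proper nonempty subset. My plan is to combine a transverse Bochner/maximum-principle estimate for the basic function $f$ with Carri\`ere's classification of isometric flows on closed $3$-manifolds \cite{carriere}, checking constancy of $f$ on each of the finitely many model flows.

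\textbf{Step 4 (Conclusion).} Once $f$ is constant the two cases follow from standard structure theory. If $f\equiv0$ then $d\eta=0$, so both the skew and the symmetric parts of $\nabla R$ vanish and $R$ is parallel; the parallel distribution $\ker\eta$ carries the transverse complex structure $J$ determined by $h$ and $\omega$, and extending it by $\phi|_{\ker\eta}=J$, $\phi R=0$ produces a normal almost contact metric structure with $d\eta=0$ and closed fundamental form, i.e. a co-K\"ahler structure with Reeb field $R$. If $f\neq0$ then $\eta\wedge d\eta=f\,\mu\neq0$, so $\eta$ is contact; after the flow-invariant normalization of $\phi$ and of $h$ on $\ker\eta$ dictated by $d\eta=2\Phi$, the field $R$ is a unit Killing Reeb field, the structure is K-contact, and in dimension three every K-contact structure is Sasakian. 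Thus $R$ is the Reeb vector field of a Sasakian structure, which completes the argument.
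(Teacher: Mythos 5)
Your Steps 1, 2 and 4 follow essentially the same route as the paper (conformal rescaling to a unit Killing field, the associated Riemannian metric, the stable Hamiltonian pair $(\eta,\omega)$ with $\omega=\iota_R\mu$, and the Molino--Sergiescu bound $\dim H^2_B(\mathcal F)\le 1$). The genuine gap is in Step 3, and it is a gap of strategy, not just of detail: you set yourself the task of proving that the basic function $f$ in $d\eta=f\,\omega$ is \emph{pointwise} constant, correctly observe that cohomology only gives $[d\eta]_B=c\,[\omega]_B$, i.e.\ $(f-c)\,\omega=d_B\beta$ for some basic $1$-form $\beta$, and then propose to close the distance by a ``transverse Bochner/maximum-principle estimate'' combined with a case-check through Carri\`ere's classification. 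That is not an argument: a basic function on a Riemannian flow is just a function on the (possibly singular) leaf space and satisfies no maximum principle forcing constancy, and there is no reason the particular $1$-form $\eta=\iota_R h$ dual to $R$ via your chosen metric should have $d\eta$ pointwise proportional to $\omega$ with constant factor. The pointwise statement $f\equiv c$ is in general false for that $\eta$, and fortunately it is also unnecessary.

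The correct move --- the one the paper makes, following Acakpo's argument for stable Hamiltonian structures --- is to absorb the basic-exact remainder into the $1$-form rather than trying to kill it. Write $d\eta=c\,\omega+d\beta$ with $c\in\mathbb R$ constant and $\beta$ basic, and set $\eta':=\eta-\beta$. Since $\beta$ is basic, $\iota_R\beta=0$, so $\eta'(R)=1$; moreover $\beta\wedge\omega=0$ (it is a $3$-form annihilated by the nonvanishing $R$), so $\eta'\wedge\omega=\eta\wedge\omega=\mu>0$; and now $d\eta'=c\,\omega$ with $c$ an honest constant. The dichotomy then runs on $c$: if $c=0$ you get a cosymplectic (hence, with $R$ Killing, co-K\"ahler in dimension $3$) structure, and if $c\neq 0$ you may rescale to get a contact form with Reeb field $R$, hence a $K$-contact and therefore Sasakian structure. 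With this replacement your Step 3 collapses to two lines and the rest of your outline goes through; you should also record explicitly why $[\omega]_B\neq 0$ (if $\omega=d_B\gamma$ then $\mu=\eta\wedge d\gamma=\pm\,d(\eta\wedge\gamma)$ would be exact, contradicting Stokes), since that is what guarantees $[\omega]_B$ generates $H^2_B(\mathcal F)\cong\mathbb R$ and hence that the coefficient $c$ is well defined.
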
 
 Recall that a \emph{Sasakian structure} on a $(2n+1)$-dimensional manifold consists of a contact form $\eta$, its Reeb vector field $R$, an endomorphism $\phi$ of the tangent bundle, and a compatible Riemannian metric $h$, all satisfying suitable integrability conditions \cite{Boyer2008,blair2010riemannian}. In dimension $3$, Sasakian geometry coincides with $K$-contact geometry (see \cite{Boyer2008, rukimbira1995, rukimbira1994}),  and can be regarded as the odd-dimensional analogue of Kähler geometry. A co-K\"ahler structure is a normal cosymplectic structure \cite{cappelletti2013survey, goldberg1969integrability, li2008topology}. In $3$ dimensions, it is equivalent to a cosymplectic structure whose Reeb vector field is Killing, also known as a $K$-cosymplectic structure \cite{bazzoni2015k}.
 
 Our theorem shows that a timelike conformal flow on a closed Lorentzian $3$-manifold is always Reeb-like, and that the underlying geometry is necessarily either contact (Sasakian) or cosymplectic (co-K\"ahler). This dichotomy corresponds to the two possible behaviors of the second fundamental form obtained by contracting the volume form with $R$, and it is reflected in the basic cohomology of the foliation generated by the flow.
 
 Our work connects several areas of geometry, including Lorentzian conformal geometry, foliation theory \cite{epstein1972periodic, prieto2001euler, saralegui1985euler}, stable Hamiltonian structures \cite{ cardona2025, banyaga1990note, acakpo2022, Hutching2009}, and the theory of almost contact structures \cite{bazzoni2015k, cappelletti2013survey}. The $3$-dimensional setting is particularly interesting due to the strong topological constraints \cite{scott1983geometries}, as well as the special properties of contact and cosymplectic structures  \cite{cappelletti2013survey, chinea1993topology, li2008topology, monna1984feuilletages, olszak1986normal, olszak1981}.
 
 The paper is organized as follows. Section~\ref{Pelimina} reviews preliminary material on conformal vector fields, stable Hamiltonian structures, and the basic cohomology of Riemannian foliations. Section~\ref{sec:Result} presents the metric adjustments that yield the stable Hamiltonian structure $(\Omega,\theta)$ and analyzes the basic cohomology of the flow to establish the main theorem. The paper concludes with several corollaries.

 \section{Preliminaries}\label{Pelimina}
 
 We begin this section with definitions and results that will be used throughout the rest of the paper.
 Let $(M,g)$ be a (pseudo-) Riemannian manifold. 
 
 A vector field $X$ on $M$ is called \textit{conformal} if there exists a smooth function $\sigma$ on $M$ such that
 $$
 \mathcal{L}_X g = \sigma g ,
 $$
 where $\mathcal{L}_X g$ denotes the Lie derivative of $g$ along $X$. The function $\sigma$ is referred to as the potential function of $X$. If $\sigma = 0$, then $X$ is a Killing vector field, and its flow consists of isometries of $(M,g)$. More generally, the class of conformal vector fields strictly contains Killing vector fields and, in the special case where $\sigma$ is a nonzero constant, they are called homothetic vector fields.

 The flow generated by a conformal vector field defines conformal transformations, which preserve the metric tensor up to a scalar multiple and thus preserve the conformal structure of the manifold (for more details, see \cite{deshmukh1, deshmukh2}). 
 
 A key fact that will be used repeatedly in this paper is the following classical result.
 
 \begin{lemma}[{\cite[Lemma~2.1]{sanchez1997structure}}]
 	Let $R$ be a nowhere-vanishing timelike conformal vector field on a Lorentzian manifold $(M,g)$. Define a conformally equivalent metric by
 	$$
 	\widetilde{g} = \frac{-1}{g(R,R)}\, g .
 	$$
 	Then $R$ is a Killing vector field with respect to $\widetilde{g}$ and satisfies $\widetilde{g}(R,R) = -1$.
 \end{lemma}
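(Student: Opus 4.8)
The plan is to set $\lambda := -1/g(R,R)$ and write $\widetilde{g} = \lambda\, g$. Since $R$ is timelike and nowhere vanishing we have $g(R,R) < 0$ everywhere, so $\lambda$ is a smooth, strictly positive function; this both makes $\widetilde{g}$ a well-defined Lorentzian metric conformal to $g$ (positivity of $\lambda$ preserves the signature) and makes the normalization claim immediate, since $\widetilde{g}(R,R) = \lambda\, g(R,R) = \bigl(-1/g(R,R)\bigr)\, g(R,R) = -1$. The substance of the lemma is therefore the Killing property, and I would attack it by computing $\mathcal{L}_R \widetilde{g}$ directly.

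For that computation I would apply the Leibniz rule for the Lie derivative of the product of a function and a tensor field, $\mathcal{L}_R(\lambda\, g) = (R\lambda)\, g + \lambda\, \mathcal{L}_R g$. The conformal hypothesis supplies $\mathcal{L}_R g = \sigma\, g$ for the potential function $\sigma$, so the expression collapses to $(R\lambda + \lambda\,\sigma)\, g$, and the entire problem reduces to showing that the scalar coefficient $R\lambda + \lambda\,\sigma$ vanishes identically. The one genuine input needed is the derivative of $f := g(R,R)$ along $R$: using that a vector field Lie-commutes with itself, $\mathcal{L}_R R = [R,R] = 0$, I would expand $Rf = (\mathcal{L}_R g)(R,R) + 2\, g(\mathcal{L}_R R, R) = \sigma\, g(R,R) = \sigma f$. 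Then from $\lambda = -1/f$ one obtains $R\lambda = f^{-2}\, Rf = \sigma/f$, whereas $\lambda\,\sigma = -\sigma/f$, so the two terms cancel and $\mathcal{L}_R \widetilde{g} = 0$, i.e.\ $R$ is Killing for $\widetilde{g}$.

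There is no real obstacle here: the argument is a single cancellation once the identity $R\bigl(g(R,R)\bigr) = \sigma\, g(R,R)$ is in hand, and that identity in turn rests only on $[R,R]=0$ together with the conformal relation. The points meriting care are purely bookkeeping---tracking the sign coming from differentiating $f^{-1}$, and noting that $\lambda > 0$ (guaranteed by the timelike hypothesis) is what ensures $\widetilde{g}$ is an honest Lorentzian metric, so that "Killing for $\widetilde{g}$" is a substantive statement rather than a degenerate one. Thus the proof is essentially a direct verification, and I would present it as such.
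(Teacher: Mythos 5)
Your proof is correct and is exactly the standard direct verification (the cancellation $R\lambda + \lambda\sigma = 0$ via $R\bigl(g(R,R)\bigr) = \sigma\, g(R,R)$); the paper itself gives no proof of this lemma, citing it directly from S\'anchez, whose argument is the same computation.
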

 
 Thus, on a Lorentzian manifold, any timelike conformal vector field can be made unit and Killing by a suitable conformal change of the metric.

 An \emph{almost contact metric structure} on a $(2n+1)$-dimensional manifold $M$ consists of a quadruple $(\eta,\xi,\phi,g)$, where $\eta$ is a $1$-form, $\xi$ is a vector field (called the \emph{Reeb vector field}) satisfying $\eta(\xi)=1$, $\phi$ is an endomorphism of the tangent bundle $TM$ such that
 $$
 \phi^2 = -\mathbb{I} + \eta \otimes \xi ,
 $$
 and $g$ is a Riemannian metric verifying
 $$
 g(\phi X, \phi Y) = g(X,Y) - \eta(X)\eta(Y)
 $$
 for all vector fields $X,Y$ on $M$.
 
 The second fundamental form $\Phi$ associated with the structure is defined by
 $$
 \Phi(X,Y) = g(X,\phi Y).
 $$
 
 The almost contact metric structure is said to be:
 \begin{enumerate}
 	\item \emph{normal} if the  tensor of $[\phi, \phi] + 2d\eta\otimes\xi$ vanishes, where $[\phi, \phi]$ is the Nijenhuis tensor of $\phi$ \cite{blair2010riemannian},
 	\item \emph{$K$-contact} if $\xi$ is Killing and $\eta$ is a contact form, i.e., $\eta \wedge (d\eta)^n \neq 0$, where $\Phi = d\eta$, see for instance \cite{banyaga1995, monna1984feuilletages, yamazaki},
 	\item \emph{Sasakian} if it is both normal and contact \cite{Boyer2008};
 	\item \emph{cosymplectic} if $d\eta = 0$ and $d\Phi = 0$ \cite{li2008topology, libermann1962quelques, cappelletti2013survey},
 	\item \emph{co-K\"ahler} if it is both normal and cosymplectic \cite{cappelletti2013survey, li2008topology}.
 \end{enumerate} 
 A stable Hamiltonian structure elegantly bridges two fundamental geometric worlds, serving as a natural generalization of both cosymplectic and contact structures.
 
 It was first introduced by Hofer and Zehnder as a key condition for hypersurfaces in the context of the Weinstein conjecture \cite{hofer1995}. Later, these structures gained broader importance by providing the precise geometric setting for establishing compactness in symplectic field theory \cite{bourgeois2003, cieliebak}.
 
 A major breakthrough occurred in 2007 when the Weinstein conjecture was proven in full generality for closed $3$-dimensional contact manifolds \cite{taubes2007}. Given that stable Hamiltonian structures generalize contact geometry, a natural next step is to extend this result to their setting. This question remains open today, though the conjecture itself has been adapted to the stable Hamiltonian framework, and several promising advances have been made in \cite{Hutching2009}.
 
 In recent years, research has increasingly turned toward the rich topology of these structures. A growing body of contemporary work explores their classification and applications across symplectic and contact topology \cite{acakpo2022, bourgeois2003, cieliebak,  eliashberg2000,  rechtman}.

 A \textit{stable Hamiltonian structure (SHS)}  on a closed, oriented $3$-manifold $M$ is a pair $(\Omega,\theta)$ consisting of a $1$-form $\theta$ and a $2$-form $\Omega$ satisfying the following conditions: 
 	\begin{align}
 		& d\Omega = 0, \nonumber\\
 		& \theta \wedge \Omega > 0, \nonumber\\
 		& \ker(\Omega) \subset \ker(d\theta).\nonumber
 	\end{align} 
 	The $2$-form $\Omega$ is called the Hamiltonian form, and $\theta$ is said to stabilize $\Omega$.
 
 The positivity condition $\theta \wedge \Omega > 0$ ensures that $\Omega$ is nowhere vanishing. Consequently, the inclusion $\ker(\Omega) \subset \ker(d\theta)$ is equivalent to the existence of a smooth function $\tau : M \to \mathbb{R}$ such that
 $$
 d\theta = \tau \,\Omega.
 $$
 
 The Reeb vector field $R$ associated with $(\theta,\Omega)$ is uniquely defined by the relations
 $$
 \Omega(R,\cdot)=0, \qquad \theta(R)=1.
 $$
 A direct consequence of the definition is the invariance of the structure under the Reeb flow:
 \begin{equation} \label{eq:invariance}
 	\mathcal{L}_{R}\theta = 0, \qquad 
 	\mathcal{L}_{R}\theta = 0,\qquad 
 	\mathcal{L}_{R}(\theta\wedge \Omega) = 0 ,\qquad 
 	\mathcal{L}_{R}\tau = 0 .
 \end{equation}
 
 The kernel of $\Omega$ determines a $1$-dimensional foliation $\mathcal{F}_\Omega$, called the \textit{stable Hamiltonian foliation}. The condition $\theta\wedge\Omega>0$ implies that $\theta$ does not vanish along $\mathcal{F}_\Omega$, that the hyperplane field $\mathcal{F}_\theta = \ker \theta$ is everywhere transverse to $\mathcal{F}_\Omega$, and that $\Omega$ is nondegenerate on $\mathcal{F}_\theta$. Thus, an SHS induces a canonical splitting
 $$
 TM = \mathcal{F}_\Omega \oplus \mathcal{F}_\theta,
 $$
 where $\mathcal{F}_\Omega$ is an oriented line bundle and $\mathcal{F}_\theta$ is a symplectic $2$-plane bundle.
 
 A Darboux theorem holds for stable Hamiltonian structures: around any point of $M$, there exist local coordinates $(q,p,z)$ (called Darboux coordinates) such that \cite[Prop.~10.1]{deleon2024}  
 $$
 \Omega = dq \wedge dp, \qquad \theta = a\,dq + b\,dp + dz.
 $$
 In these coordinates, the condition $\ker\Omega \subset \ker d\theta$ translates into
 $$
 \frac{\partial a}{\partial z} = \frac{\partial b}{\partial z} = 0.
 $$ 
 Stable Hamiltonian structures generalize both contact structures (when $\Omega = d\theta$) and cosymplectic structures (when $d\theta = 0$ and $d\Omega=0$). They play a fundamental role in Hamiltonian dynamics and in the study of Reeb flows, see, for instance, \cite{acakpo2022, cardona2025,  Hutching2009}. 
 
 Note that there exists a Darboux theorem for the cosymplectic case \cite{{Cantrijn1992},  deleon1989} as well as for the contact case \cite{banyaga2017, deleon1989}.

 Let $\mathcal{F}$ be a foliation on $M$. A differential form $\omega$ is said to be \emph{basic} if
 $$
 \iota_X \omega = 0 \quad \text{and} \quad \mathcal{L}_X \omega = 0
 $$
 for every vector field $X$ tangent to $\mathcal{F}$. The space of basic forms is closed under exterior differentiation and defines the basic cohomology $H_B^*(\mathcal{F})$ (see \cite{molino1988, tondeur1997geometry}).
 
 If $\mathcal{F}$ is generated by a non-singular Killing vector field on a Riemannian manifold, then $\mathcal{F}$ is a \emph{Riemannian foliation}, and its basic cohomology groups are finite-dimensional \cite{kacimi1985cohomologie}. For a Riemannian flow on a closed oriented $3$-manifold, the following result holds.
 
 \begin{theorem}\cite[Theorem~A]{molino1985deux},  \cite[Theorem.~10.17]{tondeur1997geometry}\label{thm:mainthe} 
 	Let $\mathcal{F}$ be a Riemannian flow on a closed oriented $3$-manifold. Then
 	$$
 	\dim H^2_B(\mathcal{F}) \leq 1.
 	$$
 \end{theorem}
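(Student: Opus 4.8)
The plan is to realize the top-degree group $H^2_B(\mathcal{F})$ as the image of a single linear functional on the closed basic $2$-forms and to reduce the bound to the injectivity of that functional. Since $\mathcal{F}$ is a Riemannian flow, I would fix a bundle-like metric and a unit Killing generator $X$ of the flow, and introduce its characteristic $1$-form $\chi = X^\flat$, which satisfies $\iota_X\chi = g(X,X) = 1$ and $\mathcal{L}_X\chi = 0$. The foliation has codimension $2$, so every basic form of degree $\geq 3$ vanishes; in particular every basic $2$-form is automatically closed, and hence $H^2_B(\mathcal{F}) = \Omega^2_B(\mathcal{F})/d\,\Omega^1_B(\mathcal{F})$ with $\Omega^2_B(\mathcal{F})$ consisting entirely of cocycles.

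Next I would observe that $d\chi$ is itself basic: from $\iota_X d\chi = \mathcal{L}_X\chi - d\,\iota_X\chi = 0$ and $\mathcal{L}_X d\chi = d\,\mathcal{L}_X\chi = 0$ we get $d\chi \in \Omega^2_B(\mathcal{F})$. Consequently, for any $\beta \in \Omega^2_B(\mathcal{F})$ the top-degree form $\chi\wedge\beta$ on $M$ is closed, because $d(\chi\wedge\beta) = d\chi\wedge\beta$ is a basic $4$-form and therefore zero. I would then set $I\colon H^2_B(\mathcal{F}) \to \mathbb{R}$, $I([\beta]) = \int_M \chi\wedge\beta$. This is well defined on cohomology: if $\beta = d\alpha$ with $\alpha$ basic, then $\chi\wedge d\alpha = d\chi\wedge\alpha - d(\chi\wedge\alpha) = -d(\chi\wedge\alpha)$, since $d\chi\wedge\alpha$ is a basic $3$-form and thus vanishes, so Stokes' theorem on the closed manifold $M$ yields $\int_M \chi\wedge d\alpha = 0$.

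It now suffices to prove that $I$ is injective, for then $H^2_B(\mathcal{F})$ embeds linearly into $\mathbb{R}$ and $\dim H^2_B(\mathcal{F}) \leq 1$. Concretely, injectivity asserts that a basic $2$-form $\beta$ with $\int_M \chi\wedge\beta = 0$ is the basic differential of a basic $1$-form, which is the analytic heart of the statement. The plan is to supply this through the transverse Hodge theory of Riemannian foliations: by El Kacimi-Alaoui the basic cohomology is finite-dimensional and admits a transverse Laplacian, and after adjusting the bundle-like metric (following Dom\'inguez) so that the mean-curvature form is basic and closed, the associated mean-curvature--twisted transverse Hodge star identifies the top basic cohomology with the line spanned by the transverse volume form, paired nondegenerately against $\chi$ by precisely the integral $I$. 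Equivalently, one may invoke Masa's tautness dichotomy, which gives $H^2_B(\mathcal{F}) \cong \mathbb{R}$ when $\mathcal{F}$ is geometrically taut and $H^2_B(\mathcal{F}) = 0$ otherwise; in either case the dimension is at most one.

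The hard part is this last analytic step. The naive basic Laplacian is not self-adjoint because of the mean-curvature term, so one cannot directly extract a harmonic representative, and the resolution requires working with the twisted basic complex after choosing the metric so that the mean curvature is a basic closed $1$-form; this is where finite-dimensionality and the transverse elliptic theory genuinely enter. By contrast, the formal part -- constructing the functional $I$ and reducing the bound to its injectivity -- is routine, so the substantive obstacle is exactly to show that $\ker I$ coincides with the space of basic-exact $2$-forms.
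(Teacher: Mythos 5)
The paper offers no proof of this statement; it is imported verbatim from Molino--Sergiescu \cite[Theorem~A]{molino1985deux} and Tondeur \cite[Theorem~10.17]{tondeur1997geometry}, so your attempt can only be measured against those sources. Measured that way, there is a genuine gap at the very first step: you ``fix a bundle-like metric and a unit \emph{Killing} generator $X$.'' A Riemannian flow carries a holonomy-invariant transverse metric, but its generator need not be Killing for any metric on $M$; flows with a Killing generator are precisely the isometric ones, and Carri\`ere's flows on hyperbolic $\mathbb{T}^2$-bundles over $\mathbb{S}^1$ are Riemannian but not isometric (indeed not taut, with $H^2_B=0$). For such a flow $\mathcal{L}_X\chi\neq 0$ for every unit generator, $d\chi$ is not basic, and your well-definedness computation $\chi\wedge d\alpha=-d(\chi\wedge\alpha)$ fails: one gets instead $\int_M\chi\wedge d\alpha=\int_M d\chi\wedge\alpha=-\int_M\kappa\wedge\chi\wedge\alpha$, which need not vanish, so the functional $I$ does not descend to $H^2_B(\mathcal{F})$. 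Your framework therefore only covers the isometric (taut) case --- which, to be fair, is the only case the paper actually uses, since its flow is generated by a Killing field of $\widehat g$ --- but it does not prove the theorem as stated for Riemannian flows.

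Second, even in the isometric case the entire content of the bound sits in the injectivity of $I$, i.e.\ in the claim that a basic $2$-form $\beta$ with $\int_M\chi\wedge\beta=0$ is the differential of a basic $1$-form, and there you simply invoke Masa's tautness dichotomy or the twisted transverse Hodge theory of El Kacimi and Dom\'inguez. Masa's dichotomy ($H^q_B\cong\mathbb{R}$ if taut, $0$ otherwise) is a strictly stronger statement than the one being proved, so citing it turns your argument into a corollary of a deeper result rather than a proof. A self-contained route that closes the gap is Kamber--Tondeur duality with the mean-curvature twist: after adjusting the bundle-like metric so that the mean-curvature form $\kappa$ is basic and closed (Dom\'inguez), one has $H^2_B(\mathcal{F})\cong H^0_B(\mathcal{F},\kappa)$, the space of basic functions $f$ with $df=f\kappa$; a nonzero solution is nowhere vanishing and the ratio of two solutions is constant, so this space has dimension at most one. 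That elementary final step is exactly what your sketch leaves unproved; the construction of $I$ is, as you yourself observe, the routine part.
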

 
 This result will play a crucial role in the analysis of the differential of the $1$-form naturally associated with our vector field.

 \section{Main Results} \label{sec:Result}

 In the rest of the paper, we adopt the conventions and definitions of co-K\"ahler and Sasakian structures as presented in
 \cite{bazzoni2015k, cappelletti2013survey, li2008topology, libermann1962quelques}.

 \begin{theorem}
 	Let $M $ be a closed, oriented, smooth 3-manifold equipped with a Lorentzian metric $g $.  
 	If $R $ is a nowhere-vanishing timelike vector field that is conformal with respect to $g $, then $R $ is the Reeb vector field of either 
 	\begin{enumerate}
 		\item[(i)] a Sasakian structure, or 
 		\item[(ii)] a co-K\"ahler structure
 	\end{enumerate}
 	on $M $.
 \end{theorem}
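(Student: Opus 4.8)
The plan is to first normalize $R$, then build a canonical stable Hamiltonian structure out of it, and finally let the basic cohomology of the induced flow decide between the two conclusions.

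\textbf{Step 1 (normalization and SHS).} First I would apply the S\'anchez lemma recalled above to replace $g$ by the conformal metric $\widetilde g = -g(R,R)^{-1}g$, so that $R$ becomes a unit timelike Killing field: $\mathcal L_R\widetilde g = 0$ and $\widetilde g(R,R)=-1$. Put $\theta = -\widetilde g(R,\cdot)$, so that $\theta(R)=1$ and, since $R$ is Killing, $\mathcal L_R\theta = 0$. Let $\mu$ be the $\widetilde g$-volume form for the given orientation and set $\Omega = \iota_R\mu$. In an adapted orthonormal coframe $(\theta,e^1,e^2)$ one computes $\Omega = e^1\wedge e^2$ and $\theta\wedge\Omega = \mu > 0$; moreover $d\Omega = \mathcal L_R\mu = 0$ because a Killing field preserves volume, and $\iota_R\,d\theta = \mathcal L_R\theta = 0$, the last identity giving $\ker\Omega = \mathbb R R \subset\ker d\theta$. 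Hence $(\Omega,\theta)$ is a stable Hamiltonian structure whose Reeb field is exactly $R$, and there is a basic function $\tau$ with $d\theta = \tau\,\Omega$.

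\textbf{Step 2 (Riemannian flow and the pairing).} The metric $h = \widetilde g + 2\,\theta\otimes\theta$ is positive definite, makes $R$ a unit Killing field, and realizes the flow $\mathcal F$ of $R$ as a Riemannian foliation on the closed oriented $3$-manifold $(M,h)$, so Theorem~\ref{thm:mainthe} gives $\dim H^2_B(\mathcal F)\le 1$. Both $\Omega$ and $\tau\Omega = d\theta$ are closed and basic, hence define classes in $H^2_B(\mathcal F)$. I would then introduce the functional $[\omega]\mapsto\int_M\omega\wedge\theta$ on $H^2_B(\mathcal F)$: it is well defined because $\int_M d\beta\wedge\theta = \int_M\beta\wedge d\theta = \int_M\tau\,\beta\wedge\Omega = 0$ for every basic $1$-form $\beta$ (indeed $\beta\wedge\Omega=0$, since $\iota_R\beta=0$), and it sends $[\Omega]$ to $\int_M\mu>0$. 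Thus $[\Omega]\neq 0$, so $H^2_B(\mathcal F)=\mathbb R\,[\Omega]$ and $[d\theta]=[\tau\Omega]=c\,[\Omega]$ for a unique constant $c$.

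\textbf{Step 3 (dichotomy and structures).} Writing $\tau\Omega = c\,\Omega + d\gamma$ with $\gamma$ basic and setting $\theta_0 = \theta-\gamma$, one still has $\theta_0(R)=1$ and $\theta_0\wedge\Omega=\mu$ (again $\gamma\wedge\Omega=0$), while $d\theta_0 = c\,\Omega$. If $c=0$, then $\theta_0$ is closed and $(\Omega,\theta_0)$ is cosymplectic; if $c\neq 0$, then $\theta_0\wedge d\theta_0 = c\,\mu\neq 0$, so $\theta_0$ is a contact form with Reeb field $R$. In either case I would complete the data to an almost contact metric structure by letting $\phi$ be the right-angle rotation on $\ker\theta_0$ determined by the transverse area form together with the associated metric $h_0$, and $\phi R=0$. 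Because $\Omega$ and $\theta_0$ are $R$-invariant, $R$ is Killing for $h_0$; hence the case $c\neq 0$ is $K$-contact and therefore Sasakian in dimension $3$, giving (i), while the case $c=0$ is cosymplectic with Killing Reeb field, i.e. $K$-cosymplectic, and therefore co-K\"ahler in dimension $3$, giving (ii).

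\textbf{Main obstacle.} The delicate point is Step~2: a priori $\tau$ is only basic and could vanish on a proper, nonempty subset of $M$, which would obstruct a naive "contact versus closed" alternative. The finiteness bound $\dim H^2_B(\mathcal F)\le 1$ is exactly what removes this possibility, forcing $d\theta$ to be basic-cohomologous to the constant multiple $c\,\Omega$ and thereby allowing the non-constant part of $\tau$ to be absorbed into the exact correction $d\gamma$. Checking that the associated metric $h_0$ genuinely makes $R$ Killing (so that "contact" upgrades to "Sasakian" and "cosymplectic" to "co-K\"ahler") is routine given the $R$-invariance of $\Omega$ and $\theta_0$, but it is the step that must be verified with care.
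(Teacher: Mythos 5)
Your proposal is correct and follows essentially the same route as the paper: conformal normalization to a unit timelike Killing field, construction of the stable Hamiltonian structure $(\Omega,\theta)$, the bound $\dim H^2_B(\mathcal{F})\le 1$ together with $[\Omega]_B\neq 0$ to write $d\theta = c\,\Omega + d\gamma$ with $\gamma$ basic, and the dichotomy on $c$ yielding the co-K\"ahler and Sasakian cases. The only (harmless) deviations are that you obtain $\iota_R d\theta = 0$ directly from Cartan's formula rather than via Wadsley's lemma and the geodesic property, and that you certify $[\Omega]_B\neq 0$ through the integration pairing $[\omega]\mapsto\int_M \omega\wedge\theta$ instead of arguing by contradiction with the exactness of the volume form.
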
  
 \begin{proof}
 	Assume that $R$ is timelike, i.e., $g(R,R) < 0$ everywhere. Define the metric
 	$$
 	\widetilde{g}_L := \frac{-1}{g(R,R)}\, g .
 	$$  
 	Then $\widetilde{g}_L(R,R) = -1$. Since $R$ is conformal for $g$, it is also conformal for $\widetilde{g}_L$. A direct computation using \cite[Lemma~2.1]{sanchez1997structure} shows that a conformal vector field must be Killing. Hence 
 	$$
 	\mathcal{L}_R \widetilde{g}_L = 0, \quad \widetilde{g}_L(R,R) = -1,
 	$$ 
 	where $\mathcal{L}_R$ denotes the Lie derivative along $R$.
 	
 	Let $\alpha_L := \widetilde{g}_L(R,\cdot)$. Define a Riemannian metric
 	$$
 	g_R := \widetilde{g}_L + 2\, \alpha_L \otimes \alpha_L .
 	$$  
 	For any vector $X$ orthogonal to $R$ with respect to $\widetilde{g}_L$, we have $g_R(X,X) = \widetilde{g}_L(X,X) > 0$.  
 	For $R$ itself:
 	$$
 	g_R(R,R) = \widetilde{g}_L(R,R) + 2 \bigl(\alpha_L(R)\bigr)^2 = -1 + 2(-1)^2 = 1.
 	$$  
 	Thus $g_R$ is positive definite. Moreover, since $\mathcal{L}_R \widetilde{g}_L = 0$ and $\mathcal{L}_R \alpha_L = 0$, it follows that $\mathcal{L}_R g_R = 0$, i.e., $R$ is Killing also for the Riemannian metric $g_R$. By a standard result (Wadsley \cite[Lemma~3.1]{wadsley1975geodesic}), there exists a Riemannian metric $\widehat{g}$ such that
 	$$
 	\widehat{g}(R,R) = 1, \quad \mathcal{L}_R \widehat{g} = 0, \quad \widehat{\nabla}_R R = 0,
 	$$  
 	where $\widehat{\nabla}$ denotes the Levi-Civita connection of $\widehat{g}$. 
 	(One can take $\widehat{g} = g_R$ and then rescale in directions orthogonal to $R$ if needed; Wadsley's lemma guarantees the geodesic property while preserving the Killing property and unit length.)
 	
 	Define the smooth 1-form
 	$$
 	\theta := \iota_R \widehat{g}.
 	$$  
 	Then $\theta(R) = 1$. Using $\widehat{\nabla}_R R = 0$ and the Killing equation
 	$$
 	\widehat{g}(\widehat{\nabla}_X R, Y) + \widehat{g}(X, \widehat{\nabla}_Y R) = 0,
 	$$
 	we obtain, for any $Y$ 
 	\begin{align}
 		2\, d\theta(R,Y) 
 		&= R\bigl(\theta(Y)\bigr) - Y\bigl(\theta(R)\bigr) - \theta([R,Y]) \nonumber\\
 		&= R\bigl(\widehat{g}(R,Y)\bigr) - Y\bigl(\widehat{g}(R,R)\bigr) - \widehat{g}(R,[R,Y]) \nonumber\\
 		&= \widehat{g}(\widehat{\nabla}_R R, Y) + \widehat{g}(R, \widehat{\nabla}_R Y) - \widehat{g}(R, \widehat{\nabla}_R Y - \widehat{\nabla}_Y R) \nonumber\\
 		&= \widehat{g}(\widehat{\nabla}_R R, Y) + \widehat{g}(R, \widehat{\nabla}_Y R) \nonumber\\
 		&= 0. \nonumber
 	\end{align}  	
 	Let $\mathrm{vol}_{\widehat{g}}$ denote the Riemannian volume form of $\widehat{g}$. Define the smooth $2$-form
 	$$
 	\Omega := \iota_R \mathrm{vol}_{\widehat{g}} .
 	$$  
 	Since $R$ is Killing, $\mathcal{L}_R \mathrm{vol}_{\widehat{g}} = 0$, and thus
 	$$
 	d\Omega = \mathcal{L}_R \mathrm{vol}_{\widehat{g}} - \iota_R d \mathrm{vol}_{\widehat{g}} = 0.
 	$$  
 	A direct check shows
 	$$
 	\iota_R (\theta \wedge \Omega) = \mathrm{vol}_{\widehat{g}} - \theta \wedge \Omega = 0,
 	$$  
 	and since $\theta \wedge \Omega$ is a nowhere-vanishing $3$-form, we have
 	$$
 	\mathrm{vol}_{\widehat{g}} = \theta \wedge \Omega.
 	$$  
 	Hence, $(\Omega, \theta)$ defines a stable Hamiltonian structure \cite{acakpo2022, cardona2025, Hutching2009}.
 	
 	The vector field $R$ generates a 1-dimensional foliation $\mathcal{F}$. Since $R$ is Killing for $\widehat{g}$, $\mathcal{F}$ is a Riemannian foliation. For such a foliation on a closed manifold, the basic cohomology groups are finite-dimensional (see \cite{kacimi1985cohomologie}). Since $\Omega$ is basic ($\iota_R \Omega = 0$ and $d\Omega = 0$), its basic cohomology class $[\Omega]_B$ is well-defined in $H^2_B(\mathcal{F})$.
 	
 	If $\Omega = d \beta$ for a basic 1-form $\beta$, then
 	$$
 	\mathrm{vol}_{\widehat{g}} = \theta \wedge d\beta.
 	$$  
 	But $\theta \wedge d\beta = d(\theta \wedge \beta)$ because $d\theta$ is also basic and $\beta$ is basic. This would imply that $\mathrm{vol}_{\widehat{g}}$ is exact, which leads to a contradiction by Stokes' theorem. Hence $[\Omega]_B \neq 0$ in $H^2_B(\mathcal{F})$ \cite{acakpo2022}. 
 	
 	For a Killing foliation on a closed 3-manifold, the basic cohomology in degree 2 is isomorphic to $\mathbb{R}$ (see Molino \cite[Theorem~A]{molino1985deux}). In fact, $H^2_B(\mathcal{F})$ is generated by $[\Omega]_B$. Therefore, every basic 2-form is a constant multiple of $\Omega$ modulo an exact basic form.
 	
 	Thus, the $2$-form $d\theta$ satisfies $\iota_R d\theta = 0$ and is closed, hence it is basic. So, there exists a constant $k \in \mathbb{R}$ and a basic 1-form $\alpha$ such that
 	$$
 	d\theta = k\, \Omega + d\alpha.
 	$$
 	
 	\paragraph{Case 1: $k = 0$.}  
 	Then $d\theta = d\alpha$. Define
 	$$
 	\widetilde{\theta} := \theta - \alpha.
 	$$  
 	We have $d\widetilde{\theta} = 0$, and since $\alpha$ is basic, $\iota_R \alpha = 0$, hence $\widetilde{\theta}(R) = 1$. Moreover,
 	$$
 	\widetilde{\theta} \wedge \Omega = \theta \wedge \Omega = \mathrm{vol}_{\widehat{g}}.
 	$$  
 	Thus $(\Omega, \widetilde{\theta})$ is a cosymplectic structure with Reeb vector field $R$. Since $R$ is Killing for $\widehat{g}$, the cosymplectic structure is $K$-cosymplectic \cite{bazzoni2015k}. In dimension 3, a $K$-cosymplectic structure is automatically co-K\"ahler \cite{goldberg1969integrability}. Hence, $R$ is the Reeb vector field of a co-K\"ahler structure.\\
 	
 	\paragraph{Case 2: $k \neq 0$.}  
 	Without loss of generality, we may assume $k = 1$. Then
 	$$
 	d\theta = \Omega + d\alpha.
 	$$  
 	Define the smooth 1-form
 	$$
 	\eta := \theta - \alpha.
 	$$  
 	Then $d\eta = \Omega$, so $d\eta$ is non-degenerate on $\ker\eta$, $\eta(R) = 1$, and
 	$$
 	\eta \wedge d\eta = \eta \wedge \Omega = \theta \wedge \Omega = \mathrm{vol}_{\widehat{g}}.
 	$$  
 	Hence $\eta$ is a contact form with dual vector the Reeb vector field $R$. Since $R$ is Killing for $\widehat{g}$, the contact structure is $K$-contact. In dimension 3, every $K$-contact structure is Sasakian \cite{Boyer2008}. Thus $R$ is the Reeb vector field of a Sasakian structure. This concludes the proof.
 \end{proof} 
 \begin{remark}
 \emph{	From the above Theorem \ref{thm:mainthe}, we deduce that, in terms of the bundle isomorphism
 	$$
 	\chi_{\lambda,\Omega} : TM \longrightarrow T^*M, \;\; 
 	v_q \longmapsto \iota_{v_q}\Omega + \lambda(v_q)\,\lambda,
 	$$
 	any nowhere-vanishing timelike vector field $R$ can be recovered as
 	$$
 	R = \chi_{\lambda,\Omega}^{-1}(\lambda),
 	$$
 	where either $ d\lambda = \Omega $ or $d\lambda = 0$. Moreover, in canonical coordinates $(t,p,q)$ adapted to the Reeb flow, the vector field $R$ takes the simple form
 	$$
 	R = \frac{\partial}{\partial t}.
 	$$
 }
 \end{remark}
 
 The following results are direct consequences of the  Theorem \ref{thm:mainthe}. But, first of all, we introduce the concept of K\"ahler mapping torus as follows. If $M$ is a K\"ahler mapping torus, then $M \cong \Sigma_\varphi$, where   $\Sigma_\varphi$ a mapping torus of $\varphi$ defined as
 $$
 \Sigma_{\varphi} = \frac{\Sigma \times [0,1]}{(x,0) \sim (\varphi(x),1)}.
 $$
 and $(\Sigma, J, h)$ is a compact K\"ahler surface and $\varphi$ is a Hermitian isometry of $(\Sigma, J, h)$ (see Li in \cite{li2008topology} for more details).
 
 \begin{proposition}
 	Let $R$ be a nowhere-vanishing timelike conformal vector field on a closed, oriented, smooth $3$-manifold $M$. 
 	If the flow of $R$ has no periodic orbits, then $R$ is the Reeb vector field of a co-K\"ahler structure, and $M$ is a K\"ahler mapping torus. More precisely, $M$ is a $\mathbb{T}^{2}$-bundle over $\mathbb{S}^{1}$.
 \end{proposition}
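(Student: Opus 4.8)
The plan is to exploit the dichotomy already furnished by the Main Theorem: $R$ is the Reeb field of either a Sasakian or a co-K\"ahler structure, with the \emph{same} Reeb field $R$ in both cases, corresponding to Case~2 ($k\neq 0$) and Case~1 ($k=0$) of its proof. The first task is to eliminate the Sasakian alternative. In dimension three a Sasakian structure is $K$-contact, and a closed $K$-contact manifold always possesses at least one closed Reeb orbit by Rukimbira \cite{rukimbira1995}; this is the elementary, isometric-flow precursor of the Weinstein conjecture and does not invoke Taubes' theorem. A closed Reeb orbit is precisely a periodic orbit of $R$, so the hypothesis that the flow of $R$ has no periodic orbits is incompatible with the Sasakian case. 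Hence $R$ is the Reeb field of a co-K\"ahler structure, and we have a closed $1$-form $\widetilde\theta$ with $\widetilde\theta(R)=1$ and $d\widetilde\theta=0$.

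Next I would invoke Li's structure theorem for compact co-K\"ahler manifolds \cite{li2008topology}: $M$ is a K\"ahler mapping torus $M\cong\Sigma_\varphi$, where $\Sigma$ is a compact K\"ahler surface --- here of real dimension two, i.e.\ a closed Riemann surface, since $\dim M=3$ --- and $\varphi$ is a Hermitian isometry of $\Sigma$. Under this identification the Reeb field is the suspension vector field $R=\partial/\partial t$, as recorded in the Remark, so the orbit of $R$ through a point $x\in\Sigma$ is periodic exactly when $x$ is a periodic point of $\varphi$, that is $\varphi^{n}(x)=x$ for some $n\ge 1$. Thus the no-periodic-orbit hypothesis translates into the purely dynamical condition that $\varphi$ has no periodic points, equivalently that $\varphi^{n}$ has no fixed point for every $n\ge 1$.

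It then remains to pin down the genus of $\Sigma$ by excluding genus $0$ and genus $\ge 2$. Since $M$ and $\Sigma$ are oriented, $\varphi$ is orientation-preserving (it is holomorphic in any case). If $\Sigma=\mathbb{S}^{2}$, then $H^{1}(\Sigma)=0$ and the Lefschetz number is $L(\varphi)=1+1=2\neq 0$, so $\varphi$ has a fixed point, a periodic orbit, contradicting the hypothesis. If $\Sigma$ has genus $\ge 2$, its holomorphic automorphism group is finite (Hurwitz), whence $\varphi$ has finite order, say $\varphi^{n}=\mathrm{id}$; then every point of $\Sigma$ is a fixed point of $\varphi^{n}$, producing periodic orbits everywhere --- again a contradiction. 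Therefore $\Sigma$ has genus one, $\Sigma=\mathbb{T}^{2}$, and $M=\Sigma_\varphi$ is a $\mathbb{T}^{2}$-bundle over $\mathbb{S}^{1}$; the surviving model is the flat torus with $\varphi$ a fixed-point-free holomorphic isometry (a translation by a non-torsion vector composed with a finite linear part), which is indeed consistent. As an independent confirmation of the topology one may also appeal to Carri\`ere's classification of Riemannian flows on closed $3$-manifolds \cite{carriere}, which forces a fixed-point-free isometric flow to live on a $\mathbb{T}^{2}$-bundle over $\mathbb{S}^{1}$.

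I expect the main obstacle to be the bookkeeping that legitimizes ``$R=\partial/\partial t$'' in the mapping-torus model: one must verify that under Li's identification the co-K\"ahler Reeb field genuinely is the suspension flow of $\varphi$, so that periodic orbits of $R$ correspond \emph{exactly} to periodic points of $\varphi$. This requires that the closed $1$-form $\widetilde\theta$ carries the integral period class underlying the fibration (so that no Tischler-type approximation is needed, which would perturb both the form and the flow) and that the monodromy isometry is precisely the $\varphi$ of Li's theorem. Once this correspondence is secured, the Lefschetz/automorphism-group dichotomy determining the genus is routine.
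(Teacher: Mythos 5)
Your proof is correct in substance but replaces the paper's key final step with a genuinely different, more elementary argument. Both you and the paper begin identically: the Sasakian alternative is killed by the existence of closed orbits for $K$-contact Reeb flows (\cite{rukimbira1995}; the paper also cites \cite{taubes2007, banyaga1990note}), and Li's structure theorem \cite{li2008topology} then exhibits $M$ as a K\"ahler mapping torus $\Sigma_\varphi$ with $\Sigma$ a closed surface. To exclude the base genus $0$ and genus $\geq 2$ cases, the paper invokes the Hutchings--Taubes theorem on the Weinstein conjecture for stable Hamiltonian structures \cite[Theorem~1.1]{Hutching2009}, which guarantees periodic Reeb orbits on any closed oriented $3$-manifold that is not a $\mathbb{T}^2$-bundle over $\mathbb{S}^1$ --- a one-line but heavy appeal. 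You instead argue directly on the monodromy: Lefschetz forces a fixed point of $\varphi$ when $\Sigma=\mathbb{S}^2$, and finiteness of the (Hermitian) isometry group forces $\varphi$ to be periodic when the genus is $\geq 2$; either way one gets periodic orbits, a contradiction. This buys independence from Seiberg--Witten technology at the cost of the bookkeeping you yourself flag: the equivalence ``periodic orbits of $R$ $\Leftrightarrow$ periodic points of $\varphi$'' holds only if $\varphi$ is the Poincar\'e first-return map of the Reeb flow on a global cross-section, not merely the monodromy of some mapping-torus presentation of $M$ (on $\mathbb{T}^3$ with an irrational Reeb direction, the Tischler fibration has trivial monodromy yet $R$ has no periodic orbits). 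This gap is closable: since $\widetilde\theta$ is closed with $\widetilde\theta(R)=1$, a Tischler perturbation yields a fibration over $\mathbb{S}^1$ whose fibers remain transverse to $R$, the first-return map is well defined by compactness, and it preserves both the induced metric ($R$ is Killing) and $\Omega$ restricted to the fiber ($\mathcal{L}_R\Omega=\iota_R\Omega=0$), hence is a Hermitian isometry to which your Lefschetz and finite-order arguments apply. With that step made explicit, your proof is complete and arguably more self-contained than the paper's.
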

 
 \begin{proof}
 	By Theorem~\ref{thm:mainthe}, the vector field $R$ is either the Reeb vector field of a Sasakian structure or of a co-K\"ahler structure.
 	
 	It is known that the Reeb vector field of a contact structure admits at least one periodic orbit (see \cite{taubes2007}). Moreover, in the Sasakian case, the Reeb vector field admits at least two closed orbits (see \cite{banyaga1990note, rukimbira1995}). Since, by assumption, the flow of $R$ has no periodic orbits, $R$ cannot be the Reeb vector field of a Sasakian structure. Therefore, $R$ must be the Reeb vector field of a co-K\"ahler structure. By Li \cite[Theorem~B]{li2008topology}, $M$ is a K\"ahler mapping torus ($M$ fibres over $\mathbb{S}^{1}$, see \cite{tischler1970}). Hence, $M$ is either an $\mathbb{S}^{2}$-bundle over $\mathbb{S}^1$, a $\mathbb{T}^{2}$-bundle over $\mathbb{S}^1$, or a $\Sigma_{s}$-bundle over $\mathbb{S}^1$ with genus $s>1$. 
 	
 	Recall that a co-K\"ahler structure is a special case of a stable Hamiltonian structure. Then, by \cite[Theorem 1.1]{Hutching2009}, in the cases of an $\mathbb{S}^{2}$-bundle or a $\Sigma_{s}$-bundle with $s>1$, the Reeb vector field must admit periodic orbits. Since the flow of $R$ has no periodic orbits, these possibilities are excluded. Therefore, $M$ must be a $\mathbb{T}^{2}$-bundle over $\mathbb{S}^1$, as claimed.
 \end{proof} 
 \begin{remark}
 \emph{	Note that if the flow of $R$ has dense orbits, then by Carri\`ere \cite{carriere}, the manifold $M$ is diffeomorphic to the $3$-torus $\mathbb{T}^3$.}
 \end{remark}
 
 \begin{proposition}\label{thm:remak2}
 	Let $R$ be a nowhere-vanishing timelike conformal vector field on a closed, oriented, smooth $3$-manifold $M$.
 	If the first Betti number of $M$ is even, then $R$ is the Reeb vector field of a Sasakian structure.
 	Moreover, $R$ admits at least two closed orbits.
 \end{proposition}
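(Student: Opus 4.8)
The plan is to leverage the dichotomy supplied by Theorem~\ref{thm:mainthe} and to eliminate the co-K\"ahler alternative by means of a parity obstruction on the first Betti number. By the main theorem, $R$ is the Reeb vector field of either a Sasakian or a co-K\"ahler structure, so it suffices to prove that the co-K\"ahler case is incompatible with $b_1(M)$ being even. Once that possibility is excluded, $R$ must be the Reeb field of a Sasakian structure, and the existence of at least two closed orbits then follows from the Sasakian (equivalently, in dimension $3$, $K$-contact) case exactly as recorded in the preceding proposition, invoking Rukimbira \cite{rukimbira1995} and Banyaga--Rukimbira \cite{banyaga1990note}.

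To rule out the co-K\"ahler case I would argue by contradiction: suppose $R$ were the Reeb vector field of a co-K\"ahler structure on $M$. As in the previous proposition, Li's Theorem~B \cite{li2008topology} then realizes $M$ as a K\"ahler mapping torus $M \cong \Sigma_{\varphi}$, where $(\Sigma, J, h)$ is a compact Riemann surface (a K\"ahler manifold of complex dimension $1$) and $\varphi$ is a Hermitian isometry of $(\Sigma, J, h)$. The goal is then to compute $b_1(M)$ and show that it is forced to be odd, contradicting the hypothesis. The computation proceeds through the Wang exact sequence of the fibration $\Sigma \hookrightarrow \Sigma_{\varphi} \to \mathbb{S}^{1}$, which, since $\varphi^{*}-\mathrm{id}$ vanishes on $H^{0}(\Sigma)$, yields
$$
b_1(M) = 1 + \dim_{\mathbb{R}} \ker\bigl(\varphi^{*} - \mathrm{id} : H^1(\Sigma;\mathbb{R}) \to H^1(\Sigma;\mathbb{R})\bigr),
$$
the summand $1$ coming from the base circle.

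The crux of the argument, and the step I expect to be the main obstacle, is showing that this fixed subspace has even real dimension. Here I would use that $\varphi$ is holomorphic: the induced map $\varphi^{*}$ preserves the Hodge decomposition $H^{1}(\Sigma;\mathbb{C}) = H^{1,0} \oplus H^{0,1}$, acting on $H^{0,1}$ as the complex conjugate of its action on $H^{1,0}$. Consequently the real fixed locus of $\varphi^{*}$ is precisely the set of real classes whose $(1,0)$-part lies in the $\varphi^{*}$-fixed complex subspace $V \subseteq H^{1,0}$; if $\dim_{\mathbb{C}} V = m$, this fixed locus has real dimension $2m$. Hence $b_1(M) = 1 + 2m$ is odd, contradicting the assumption that $b_1(M)$ is even. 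The geometric content is entirely concentrated in the fact that the mapping-torus fiber is K\"ahler, which converts the even first Betti number of a K\"ahler manifold into an odd one after suspension over the circle. The contradiction forces the Sasakian alternative, and the closed-orbit count follows as indicated. Alternatively, one may bypass the explicit Wang-sequence computation by invoking the known fact that compact co-K\"ahler manifolds have odd first Betti number \cite{cappelletti2013survey, li2008topology}.
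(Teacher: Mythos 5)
Your proposal is correct and follows the same overall route as the paper: invoke the dichotomy of the main theorem, exclude the co-K\"ahler alternative because a compact co-K\"ahler $3$-manifold has odd first Betti number, and conclude the two-closed-orbit statement from \cite{banyaga1990note, rukimbira1995}. The only difference is that where the paper simply cites Chinea--de Le\'on--Marrero \cite[Theorem~11]{chinea1993topology} for the parity obstruction, you supply a self-contained derivation via Li's mapping-torus description, the Wang sequence $b_1(M)=1+\dim\ker(\varphi^*-\mathrm{id})|_{H^1(\Sigma;\mathbb{R})}$, and the $\varphi^*$-invariance of the Hodge decomposition forcing that kernel to be even-dimensional; this computation is sound and is in fact the standard proof of the cited result, and you note the citation shortcut yourself, so the two arguments coincide in substance.
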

 
 \begin{proof}
 	It is known that the first Betti number of a co-K\"ahler manifold is odd (see \cite[Theorem 11]{chinea1993topology}).
 	By Theorem~\ref{thm:mainthe}, the vector field $R$ is either the Reeb vector field of a Sasakian structure or the Reeb vector field of a co-K\"ahler structure.
 	Since the first Betti number of $M$ is even by assumption, $M$ cannot admit a co-K\"ahler structure.
 	Therefore, $R$ must be the Reeb vector field of a Sasakian structure.
 	Furthermore, by \cite{banyaga1990note, rukimbira1995, rukimbira1994}, the Reeb vector field of a Sasakian manifold admits at least two closed orbits.
 \end{proof} 
 As a direct consequence of Proposition~\ref{thm:remak2}, we deduce that any nowhere-vanishing timelike conformal vector field $R$ on a closed, oriented, smooth $3$-manifold $M$ with finite fundamental group is necessarily the Reeb vector field of a Sasakian structure. Indeed, in this case, the first Betti number of $M$ vanishes. Recall that the first Betti number of a Sasakian manifold is either zero or even (see \cite{Boyer2008}), while for $3$-dimensional co-K\"ahler manifolds, all Betti numbers are odd (see \cite{chinea1993topology}).
 
 \begin{proposition}\label{thm:COCO}
 	Let $R$ be a nowhere-vanishing timelike conformal vector field on a closed, oriented, smooth $3$-manifold $M$. If the first Betti number of $M$ is odd, then $R$ is the Reeb vector field of a co-K\"ahler structure. 
 	
 	Furthermore, there exists a finite covering $\widehat{M}$ of $M$ which is diffeomorphic to the product $\Sigma \times \mathbb{S}^{1}$, where $\Sigma$ is a K\"ahler manifold.
 \end{proposition}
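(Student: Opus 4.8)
The plan is to first pin down the co-K\"ahler alternative using the topological obstruction carried by the first Betti number, and then to extract the product structure on a finite cover from the mapping-torus description of co-K\"ahler $3$-manifolds. For the first assertion, I would invoke the dichotomy of Theorem~\ref{thm:mainthe}: the field $R$ is the Reeb field of either a Sasakian or a co-K\"ahler structure, and the two cases are separated by an arithmetic constraint on $b_1(M)$. On the one hand, the first Betti number of a compact Sasakian manifold is either zero or even \cite{Boyer2008}; on the other hand, $b_1(M)$ is odd by hypothesis. Hence the Sasakian case is excluded outright, and $R$ must be the Reeb vector field of a co-K\"ahler structure. This is the complementary statement to Proposition~\ref{thm:remak2}, and it is consistent with the fact that all Betti numbers of a $3$-dimensional co-K\"ahler manifold are odd \cite{chinea1993topology}.

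For the second assertion I would start from the structural description already used above. Since $R$ is the Reeb field of a co-K\"ahler structure, the defining $1$-form $\eta$ is closed and nowhere zero, so by Tischler's theorem \cite{tischler1970} $M$ fibres over $\mathbb{S}^{1}$, and by Li \cite[Theorem~B]{li2008topology} it is realized as a K\"ahler mapping torus $M \cong \Sigma_{\varphi}$, where $(\Sigma, J, h)$ is a compact K\"ahler surface and $\varphi$ is a Hermitian (hence holomorphic) isometry of $\Sigma$. The key observation is that $\varphi$ lies in the isometry group $\mathrm{Isom}(\Sigma, h)$, which is a compact Lie group. Let $T := \overline{\langle \varphi \rangle}$ denote the closure of the cyclic group it generates; being a compact abelian Lie group, $T \cong \mathbb{T}^{k} \times F$ with $F$ finite and cyclic. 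Writing $n := |F|$, the power $\varphi^{n}$ then lands in the identity component $\mathbb{T}^{k}$ of $T$, and is therefore connected to the identity through a path of (holomorphic) isometries of $\Sigma$.

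I would then conclude by a standard untwisting argument. Pulling the fibration $M = \Sigma_{\varphi} \to \mathbb{S}^{1}$ back along the $n$-fold cover $\mathbb{S}^{1} \to \mathbb{S}^{1}$, $z \mapsto z^{n}$, produces an $n$-fold cover $\widehat{M} \cong \Sigma_{\varphi^{n}}$ of $M$. Since $\varphi^{n}$ is isotopic to the identity through the path in $\mathbb{T}^{k}$ just constructed, an explicit isotopy $\psi_{t}$ with $\psi_{0} = \mathrm{id}$, $\psi_{1} = \varphi^{n}$ descends from $(x,t) \mapsto (\psi_{t}(x), t)$ to a diffeomorphism $\Sigma \times \mathbb{S}^{1} \to \Sigma_{\varphi^{n}}$. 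Hence $\Sigma_{\varphi^{n}}$ is diffeomorphic to the product $\Sigma \times \mathbb{S}^{1}$, the fibre $\Sigma$ retaining its K\"ahler structure, and $\widehat{M} \cong \Sigma \times \mathbb{S}^{1}$ with $\Sigma$ K\"ahler is the claimed finite covering.

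The main obstacle is the second part, and specifically the passage from the monodromy $\varphi$ to a power lying in the identity component of the isometry group. A naive hope that $\varphi$ itself has finite order is false---on a torus fibre it may be an irrational rotation---so the argument must genuinely exploit that $\varphi^{n}$ sits in the \emph{connected} isometry group and that a mapping torus with null-isotopic monodromy is a product. The remaining delicate point is to keep the identifications compatible with the K\"ahler data, so that $\Sigma$ is genuinely K\"ahler in the cover; this is guaranteed because the co-K\"ahler normality condition forces $\varphi$ to be a holomorphic isometry, via Li's theorem.
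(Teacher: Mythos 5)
Your proposal is correct, and the first half follows the paper's route exactly: invoke the Sasakian/co-K\"ahler dichotomy of the main theorem and rule out the Sasakian alternative because $b_1$ of a compact Sasakian manifold is zero or even, whereas $b_1(M)$ is odd. (If anything, your phrasing here is cleaner than the paper's, which attributes this step somewhat loosely to Proposition~\ref{thm:remak2}.) The second half is where you genuinely diverge. The paper simply cites Bazzoni--Oprea \cite[Theorem~3.4]{bazzoni2014} for the statement that a compact co-K\"ahler manifold is finitely covered by a product of a K\"ahler manifold with a circle. You instead reprove the relevant special case from scratch: after Li's mapping-torus description $M\cong\Sigma_\varphi$ with $\varphi$ a Hermitian isometry, you pass to the closure $T=\overline{\langle\varphi\rangle}$ inside the compact Lie group $\mathrm{Isom}(\Sigma,h)$, observe that the component group $T/T_0$ is finite and cyclic of some order $n$ (being topologically, hence actually, generated by the class of $\varphi$), so that $\varphi^n$ lies in the torus $T_0$ and is isotopic to the identity through holomorphic isometries, and then untwist the pulled-back mapping torus $\Sigma_{\varphi^n}$ via $(x,t)\mapsto(\psi_t(x),t)$. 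Each step checks out, including the descent of that map to a diffeomorphism $\Sigma\times\mathbb{S}^1\to\Sigma_{\varphi^n}$ and the correct warning that $\varphi$ itself need not have finite order. What your approach buys is self-containment and an explicit covering degree $n=|T/T_0|$; what the paper's citation buys is brevity and a statement valid for co-K\"ahler manifolds in all dimensions. Both are legitimate proofs of the proposition.
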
 
 \begin{proof}
 	From Proposition \ref{thm:remak2}, $R$ is the Reeb vector field of a co-K\"ahler structure, and by \cite[Theorem 2]{li2008topology}, $M$ is a K\"ahler mapping torus. Then, by \cite[Theorem~3.4]{bazzoni2014}, $M$ is finitely covered by a product $\Sigma \times \mathbb{S}^1$, where $\Sigma$ is a compact K\"ahler surface. This completes the proof.
 \end{proof}
 \begin{remark}
 \emph{	From Proposition~\ref{thm:COCO}, if the first Betti number of $M$ is odd, then $M$ is a co-K\"ahler manifold with co-K\"ahler structure $(\alpha,\xi,\phi,g)$.  
 	Now, following \cite{watson1983}, for each real number $a \in \mathbb{R}$ and $b \ne 0$, the cartesian product $M \times M$ admits a K\"ahler structure $(J_{a,b}, G_{a,b})$ defined by
 \begin{align}
 		J_{a,b}(X_1, X_2) &= 
 		\Bigl(
 		\phi X_1 - \Bigl(\frac{a}{b} \, \alpha(X_1) + \frac{a^2 + b^2}{b} \, \alpha(X_2)\Bigr)\, \xi, \nonumber\\
 		& \phi X_2 + \Bigl(\frac{1}{b} \, \alpha(X_1) + \frac{a}{b} \, \alpha(X_2)\Bigr)\, \xi
 		\Bigr),\nonumber\\ 	 
 		G_{a,b}\bigl((X_1, X_2), (Y_1, Y_2)\bigr) &= 
 		g(X_1, Y_1) + a \, \alpha(X_1) \alpha(Y_2) + a \, \alpha(Y_1) \alpha(X_2) \nonumber\\
 		&  + (a^2 + b^2 + 1) \, \alpha(X_2) \alpha(Y_2) + g(X_2, Y_2).\nonumber
 	\end{align} }
 \end{remark} 
 \begin{remark}
 \emph{In the case of an odd first Betti number, $M$ is a K\"ahler mapping torus ($M = S_{\varphi}$). Let $\mathrm{Isom}(S,h)$ denote the group of Hermitian isometries of $S$, and $\mathrm{Isom}_0(S,h)$ its identity component. By \cite[Theorem~6.6]{bazzoni2014}, the diffeomorphism $\varphi$ is either isotopic to the identity or of finite order in $\mathrm{Isom}(S,h)/\mathrm{Isom}_0(S,h)$. In particular, $\varphi$ is periodic. By \cite[Theorem~5.4]{scott1983geometries}, $M$ is a Seifert fibered space over a $2$-dimensional orbifold $B$ with Euler number zero. Consequently, $M$ admits one of the following Thurston geometries:  	
 	\begin{enumerate}
 		\item $\mathbb{S}^2 \times \mathbb{S}^1$,
 		\item $ \mathbb{H}^2 \times \mathbb{R}/\Gamma$ with $\Gamma \subset \mathrm{Isom}_0(\mathbb{H}^2 \times \mathbb{R})$,
 		\item $\mathbb{T}^2$-bundles over $\mathbb{S}^1$ with periodic monodromy.
 	\end{enumerate}
 }
 \end{remark}

  	\section*{Acknowledgments}	
  
  The authors are grateful to A. Banyaga (PennState University, USA) for invaluable discussions and support.

 	\end{document}